\documentclass{article}
\usepackage[]{KLMM/klmm}
\usepackage{listings}

\DeclareMathOperator{\ord}{ord}
\newcommand{\Fp}{\mathbb{F}_p}

\newcommand{\res}[2]{[#1]_{#2}}
\newcommand{\Dc}{\mathcal{D}}
\newcommand{\ppart}{\operatorname{ppart}}

\lstdefinestyle{lean}{
  basicstyle=\small\ttfamily,
  columns=fullflexible,
  keepspaces=true,
  breaklines=true,
  frame=single,
  framerule=0.3pt,
  rulecolor=\color{black!30},
  xleftmargin=0.6em,
  framexleftmargin=0.6em,
  aboveskip=0.8em,
  belowskip=0.8em
}

\title{A Greatest Common Divisor Criterion of Certain Binomial Coefficients}
\author{Dakai~Guo\institution{State Key Laboratory of Mathematical Sciences,
Academy of Mathematics and Systems Science, Chinese Academy of Sciences, and
University of Chinese Academy of Sciences. 
This work is supported by the Strategic Priority
Research Program of Chinese Academy of Sciences under Grant XDA0480502 and
XDA0480503.}
    \and Ruichen Qiu\instref{1}
    \and Yichuan Cao\instref{1}
    \and Ruyong Feng\instref{1}
    \and Xiao-Shan Gao\instref{1}
}
\date{\today}

\begin{document}
\maketitle

\begin{abstract}
The binomial greatest common divisor (gcd) criterion recorded as OEIS A080170 is proven. The criterion also appears as conjecture (17) in Ralf Stephan's list of OEIS conjectures.  For
$k\geq 2$, put
\[
  D(k)=\gcd_{2\leq q\leq k+1}\binom{qk}{k},
  \qquad n=k+1.
\]
If $P$ is the largest prime-power component $p^a$ exactly dividing $n$, then the criterion asserts
\[
  D(k)=1 \quad\Longleftrightarrow\quad \frac{n}{P}>P.
\]
The proof is formalized in Lean and
the Lean artifact is accepted as part of the Formal Conjectures project.
Both the natural-language proof and the Lean formalization are generated by the
MechMath Agent Team, an AI agent developed by the authors.
\end{abstract}

\noindent\textbf{2020 Mathematics Subject Classification.}
Primary 11B65; Secondary 11A05, 05A10.

\noindent\textbf{Keywords.}
binomial coefficients, greatest common divisors, Lucas' theorem, OEIS,
formalized mathematics.

\section{Introduction}

Let
\[
  D(k)=\gcd_{2\leq q\leq k+1}\binom{qk}{k}
\]
for $k\geq 2$.  The On-Line Encyclopedia of Integer Sequences (OEIS) is a database
of integer sequences, together with descriptions, formulae, programs,
references, and links \cite{sloane}.  OEIS A080170 records the values of
$k$ for which $D(k)=1$, together with a conjectural description in terms of
the factorization of $k+1$ \cite{oeis}.  This binomial gcd criterion was
formulated by Ralf Stephan as conjecture (17) in his collection, \emph{Prove or Disprove: 100 Conjectures from the OEIS} \cite{stephan}.  The
purpose of this note is to prove that description.

The theorem fits into a broader line of work on common divisors of selected
binomial coefficients.  A classical starting point is Ram's theorem that
\[
  \gcd_{0<i<n}\binom{n}{i}
\]
is $p$ if $n$ is a power of the prime $p$, and is $1$ otherwise
\cite{ram}.  The same row-gcd criterion appears, for example, in the work of
Brown and Peterson and in the modern formulation of L\"u and Panov
\cite{brownpeterson,lupanov}.  Other work changes the selected set of
binomial coefficients: Joris, Oestreicher, and Steinig studied gcds of certain
finite regions of Pascal's triangle \cite{jorisoestreichersteinig}; McTague
studied gcds of the subfamily
$\binom{n}{q},\binom{n}{2q},\binom{n}{3q},\ldots$ \cite{mctague}, and Hong
studied further families selected by arithmetic restrictions on the lower
index \cite{hong}.  The present problem is different: the lower entry is
fixed at $k$, while the upper entry varies through the multiples $qk$.

For a positive integer $n$ and a prime $p$, write $p^a\Vert n$ when $p^a\mid n$ and
$p^{a+1}\nmid n$.  We call such $p^a$ an exact prime-power component of $n$,
and define
\[
  \ppart(n)=\max\{p^a:\ p^a\Vert n\}
\]
for $n>1$.  The main result is the following theorem, which proves OEIS A080170.

\begin{theorem}
\label{thm:a080170}
Let $k\geq 2$ and put $n=k+1$.  Then
\[
  \gcd_{2\leq q\leq k+1}\binom{qk}{k}=1
  \quad\Longleftrightarrow\quad
  \frac{n}{\ppart(n)}>\ppart(n).
\]
\end{theorem}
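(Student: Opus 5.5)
The plan is to reduce the gcd to a separate statement for each prime, and then read both directions off base-$p$ digits using Kummer's theorem: $v_p\binom{qk}{k}$ equals the number of carries when $k$ and $(q-1)k$ are added in base $p$.

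\emph{Reduction.} $D(k)=1$ holds iff for every prime $p$ there is some $m=q-1\in[1,k]$ such that the addition $k+mk$ has no carry in base $p$. Fix $p$ and write $n=p^a u$ with $p\nmid u$ and $a\ge 0$. Then
\[
  k=p^a(u-1)+(p^a-1),
\]
so the lowest $a$ digits of $k$ are all $p-1$. A carry-free addition therefore forces the lowest $a$ digits of $mk$ to vanish. Since $p\nmid k$, this means $p^a\mid m$. Write $m=p^a t$. The bound $m\le k$ becomes $1\le t\le u-1$. After discarding the $a$ low zero digits, the problem becomes: find $t\in[1,u-1]$ such that $(u-1)+tk$ is carry-free in base $p$.

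\emph{Necessity.} Suppose $n/\ppart(n)<\ppart(n)$, and take $p^a=\ppart(n)$, so that $u<p^a$. For any $t\in[1,u-1]$, the lowest $a$ digits of
\[
  tk=tp^a u-t
\]
form the number $p^a-t$, because $0<t<p^a$. The number $u-1<p^a$ fits entirely in these $a$ digits. Since
\[
  (p^a-t)+(u-1)\ge p^a \quad\text{whenever } t\le u-1,
\]
the $a$-digit addition overflows and a carry occurs. Hence $p\mid\binom{qk}{k}$ for every $q$, so $D(k)\ne 1$. Equality $n/P=P$ is impossible, since $P$ is coprime to $n/P$ and $P>1$. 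So the theorem's condition is exactly the negation of this case.

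\emph{Sufficiency.} Assume $\ppart(n)<n/\ppart(n)$. Then every exact prime-power component $p^a$ of $n$ satisfies $p^a<u$, and primes with $a=0$ satisfy this trivially since $u=n\ge 3$. For each such $p$ we must exhibit $t\in[1,u-1]$ with $(u-1)+tk$ carry-free.

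\begin{itemize}
\item First try $t=p^j$ with $p^j\le u-1$. Then $tk$ is $k$ shifted by $j$ digits. Since $k\equiv -1 \pmod{p^a}$, the lowest $a$ digits of $k$ are all $p-1$. The aim is to choose $j$ so that these maximal digits land above the top digit of $u-1$, or at least beyond all of its nonzero digits.
\item If that fails, try $t$ with $tk\equiv -t \pmod{p^{a+j}}$ and attempt to make the low digits of $-t$ complementary to those of $u-1$ digit by digit. The inequality $u>p^a$ is exactly what leaves room for such a $t$ below $u$.
\end{itemize}

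I expect this constructive step to be the main obstacle. The key is to show that the single inequality $p^a<u$ always leaves enough room, possibly via an induction on the number of base-$p$ digits of $u-1$. The case $p\nmid n$ should come out of the same argument with $a=0$.
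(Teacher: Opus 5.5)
There is a genuine gap: the sufficiency direction is not proved. Your reduction is correct: a carry-free addition forces $p^a\mid m$, which leaves the addition of $u-1$ and $tk$ with $1\le t\le u-1$. Your necessity argument is also correct. Together they are the easy half of Lemma~\ref{lem:primary}.

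Sufficiency carries essentially all of the difficulty. You must show two things:
(i) for each $p\mid n$ with $u>p^a$, some $t\in[1,u-1]$ makes $(u-1)+tk$ carry-free;
(ii) for each $p\nmid n$, some $m\in[1,k]$ makes $k+mk$ carry-free.

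Your two bullets are hopes rather than arguments, and the first one genuinely fails. Take $n=30$, where $n/\ppart(n)=6>5$, and $p=2$. Then $u-1=14=1110_2$ and $k=29=11101_2$. Each of $t=1,2,4,8$ produces a carry, and the unique good $t\le 14$ is $t=5$. Your remark that (ii) is the case $a=0$ of ``the same argument'' is likewise unsupported, since no argument for (i) has been given.

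The paper never constructs $t$.

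For (ii) it uses Lemma~\ref{lem:prime-divides-n}. If $p$ divides $F(q)=\binom{qk}{k}$ for $2\le q\le k+1$, then $\Delta^rF(1)\equiv(-1)^r$. Newton interpolation from the node $1$ then gives $0=F(0)\equiv k+1\pmod p$.

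For (i) it argues by contrapositive.
\begin{itemize}
\item Put $m=u-1$ and $M=k$, which is a unit mod $p$. Let $p^L$ be the least power of $p$ exceeding $m$, and set $c=p^L-1-m$.
\item Then $m+tM$ is carry-free iff $\res{tM}{p^L}\in\Dc_c$. So $t\in[1,m]$ is good iff $t\equiv M^{-1}y\pmod{p^L}$ for some $y\in\Dc_c$. In the example, $M^{-1}\equiv5\pmod{16}$ and $\Dc_1=\{0,1\}$, which is exactly why $t=5$ works.
\item If no good $t$ exists, then $s\equiv-M^{-1}$ maps $\Dc_c\setminus\{0\}$ into $\{1,\dots,c\}$ modulo $p^L$.
\item Since $c\equiv-u\not\equiv0\pmod p$, the digit-box stabilizer theorem (Theorem~\ref{thm:stabilizer}) forces $s\equiv1$, i.e.\ $k\equiv-1\pmod{p^L}$. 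That theorem is proved by induction on $L$ using the gap bound and the no-translation lemma.
\item Hence $p^L\mid n$, so $p^L\mid p^a$ and $u\le p^L\le p^a$.
\end{itemize}

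This rigidity of digit boxes under multiplication is the missing idea. Any digit-by-digit construction of $t$ would in effect have to reprove it.
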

The proof first shows, by a finite-difference argument, that every prime divisor of $D(k)$ divides $k+1$.  For primes $p\mid k+1$, Lucas' theorem reduces the problem to a digitwise stabilizer statement for a finite box of base-$p$ digits.  The resulting $p$-primary criterion is $p\mid D(k)$ if and only if $n/p^a\leq p^a$, and the theorem follows by checking the largest exact prime-power component of $n$.  

This problem also has a formalization motivation.  
Problem A080170 is not only an OEIS
problem from Stephan's collection, but also a member of
\texttt{FC100OpenSet1}, a distinguished subset of 100 open research problems
in the Formal Conjectures benchmark.  
The update associated with this proof
therefore changes the category count for that subset from 96 open and 4
solved entries to 95 open and 5 solved entries. Thus, the result provides a
test case in which a previously open research-level formal statement receives
both a mathematical proof and a machine-checkable Lean proof.

Formal Conjectures is an open
Lean 4 repository and benchmark of formalized mathematical conjectures
\cite{formalconjecturesrepo,formalconjecturespaper}.  The project paper
reports a benchmark with 2615 formalized mathematical problem statements,
including 1029 open research conjectures and 836 solved problems for proof
autoformalization \cite{formalconjecturespaper}.  Its stated goals include
providing benchmark problems for automated theorem proving and
autoformalization, clarifying the exact meaning of conjectures through
formalization, and identifying definitions that should enter mathlib
\cite{formalconjecturesrepo}.

Formal Conjectures is an open benchmark for verified mathematical discovery
\cite{formalconjecturespaper,formalconjecturesrepo}.  The present theorem is
one of the OEIS problems selected for that benchmark, originating from
Stephan's list of one hundred OEIS conjectures \cite{stephan}.  More
specifically, \texttt{gcdCondition\_iff\_primePowerCondition}, in the
\texttt{OeisA80170} namespace, is listed in the subset file
\texttt{FC100OpenSet1.lean}.  The associated
     pull request changes its tag
from \texttt{research open} to
\texttt{research solved} and updates the verified category counts for that
100-problem subset from \(96\) open and \(4\) solved entries to \(95\) open
and \(5\) solved entries. Thus, the Lean development turns the A080170 entry
from an open formal-conjecture instance into a solved one.

Both the natural-language proof and the Lean formalization are generated by the
MechMath Agent Team developed by the authors~\cite{mechmath}.
MechMath Agent Team is a large language model driven agent designed to generate proofs for mathematical theorems expressed in both natural language and formal language in Lean. 

The rest of the paper is organized as follows.
In Section \ref{sec-proof}, the proof of Theorem \ref{thm:a080170} is given.
In Section \ref{sec:lean}, the Lean formalization of the proof is described.
In Section \ref{sec-conc}, conclusions are presented..

\section{Proof of Theorem \ref{thm:a080170}}
\label{sec-proof}
In this section, the proof of Theorem \ref{thm:a080170} is presented in several steps.  
The only standard external input 
is Lucas's theorem
\cite{lucas}; for broader background on Lucas-type congruences, see the
survey \cite{mestrovic}. For completeness, we include the short polynomial
proof of the non-vanishing criterion used here.

\subsection{Preliminaries}

For an integer $x$ and a positive integer $M$, let $\res{x}{M}$ denote the
least nonnegative residue of $x$ modulo $M$.

\begin{lemma}[Newton interpolation by forward differences]
\label{lem:newton}
Let $f$ be a polynomial of degree at most $d$, and define
\[
  \Delta f(x)=f(x+1)-f(x).
\]
For every integer $a$ and every $x$,
\[
  f(x)=\sum_{r=0}^{d}\binom{x-a}{r}\Delta^r f(a).
\]
\end{lemma}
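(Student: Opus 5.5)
Since both sides of the identity depend on $x$ only through $x-a$, I would first translate: set $g(y)=f(y+a)$. Then $g$ is again a polynomial of degree at most $d$, and $\Delta^r g(0)=\Delta^r f(a)$ for every $r$, because $\Delta$ commutes with translation. It therefore suffices to treat $a=0$, that is, to show
\[
  g(y)=\sum_{r=0}^{d}\binom{y}{r}\Delta^r g(0).
\]
Here $\binom{y}{r}=y(y-1)\cdots(y-r+1)/r!$ is viewed as a polynomial in $y$ of degree exactly $r$.

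The key steps are as follows.

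\emph{Step 1: a basis.} The polynomials $\binom{y}{r}$ for $0\le r\le d$ have distinct degrees $0,1,\dots,d$. Hence they form a basis of the space of polynomials of degree at most $d$ over $\mathbb{Q}$ (or over whatever field contains the coefficients of $f$). We can therefore write $g(y)=\sum_{r=0}^d c_r\binom{y}{r}$ for unique coefficients $c_r$.

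\emph{Step 2: Pascal's rule under $\Delta$.} Pascal's rule $\binom{y+1}{r}-\binom{y}{r}=\binom{y}{r-1}$ holds as a polynomial identity, since it holds for all nonnegative integers $y$. It says $\Delta\binom{y}{r}=\binom{y}{r-1}$, with the convention $\binom{y}{-1}=0$. Iterating gives $\Delta^s\binom{y}{r}=\binom{y}{r-s}$.

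\emph{Step 3: reading off coefficients.} Apply $\Delta^s$ to the expansion from Step 1 and evaluate at $y=0$. Since $\binom{0}{j}=\delta_{j,0}$, only the term with $r=s$ survives, so $\Delta^s g(0)=c_s$. This yields the claimed formula.

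Finally, substitute $y=x-a$ to return to $f$. Because the argument is a polynomial identity in $x$, it holds for all $x$, not only for integers.

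No step here is a real obstacle. The only point needing care is Step 2's claim that Pascal's rule holds as a polynomial identity, which follows because two polynomials agreeing at infinitely many integers are equal.

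Alternatively, one could induct on $d$ using the identity $f(x)=f(a)+\sum_{t=a}^{x-1}\Delta f(t)$. That route works only for integer $x\ge a$ and then needs the same polynomial-extension remark, so I would use the basis argument above.
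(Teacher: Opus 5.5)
Your proof is correct, but it takes a different route from the paper's.

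\textbf{The paper's route.} The paper never expands $f$ in the binomial basis. It defines $g(x)=\sum_{r=0}^{d}\binom{x-a}{r}\Delta^r f(a)$ and writes $\Delta^r f(a)=\sum_{i=0}^{r}(-1)^{r-i}\binom{r}{i}f(a+i)$. It then checks $g(a+j)=f(a+j)$ for $j=0,\dots,d$ using the identity $\sum_{r=i}^{j}(-1)^{r-i}\binom{j}{r}\binom{r}{i}=\delta_{ij}$. It concludes $f=g$ because two polynomials of degree at most $d$ that agree at $d+1$ points coincide.

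\textbf{Your route.} You use the triangular basis $\binom{y}{r}$ and the fact that $\Delta$ acts on it as a lowering operator (Pascal's rule). The coefficients are then read off by applying $\Delta^s$ and evaluating at $0$, a discrete Taylor argument.

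\textbf{What each buys.} Your argument is more conceptual. It avoids the alternating-sum inversion identity and gives uniqueness of the coefficients for free. The paper's argument is a finite computation with values at integer points. Along the way it records the explicit formula for $\Delta^r f(a)$ in terms of $f(a),\dots,f(a+r)$. That formula is exactly what Lemma~\ref{lem:prime-divides-n} then uses. With your approach you would need to derive it separately, e.g.\ by expanding $\Delta=E-1$ binomially, where $Ef(x)=f(x+1)$.

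\textbf{A minor caveat.} Your parenthetical ``whatever field contains the coefficients of $f$'' should be restricted to characteristic zero, or at least a $\mathbb{Q}$-algebra, since $\binom{y}{r}$ involves division by $r!$. The paper works over $\mathbb{Q}$, so this does not affect correctness.
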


\begin{proof}
Let
\[
  g(x)=\sum_{r=0}^{d}\binom{x-a}{r}\Delta^r f(a).
\]
Both $f$ and $g$ have degree at most $d$.  It is enough to show that they
agree at $x=a+j$ for $j=0,\ldots,d$.

For $r\geq 0$,
\[
  \Delta^r f(a)=\sum_{i=0}^{r}(-1)^{r-i}\binom{r}{i}f(a+i).
\]
Thus, at $x=a+j$,
\[
  g(a+j)
  =\sum_{r=0}^{j}\binom{j}{r}
       \sum_{i=0}^{r}(-1)^{r-i}\binom{r}{i}f(a+i).
\]
The coefficient of $f(a+i)$ in this expression is
\[
  \sum_{r=i}^{j}\binom{j}{r}(-1)^{r-i}\binom{r}{i}
  =\binom{j}{i}\sum_{u=0}^{j-i}(-1)^u\binom{j-i}{u}.
\]
This coefficient is $1$ if $i=j$ and $0$ otherwise.  Therefore
$g(a+j)=f(a+j)$ for $j=0,\ldots,d$.  Hence $f=g$.
\end{proof}

\begin{lemma}[Lucas non-vanishing criterion]
\label{lem:lucas}
Let $p$ be prime, and write
\[
  N=\sum_i N_i p^i,\qquad K=\sum_i K_i p^i
\]
with $0\leq N_i,K_i<p$.  Then
\[
  \binom{N}{K}\not\equiv 0 \pmod p
  \quad\Longleftrightarrow\quad
  K_i\leq N_i \text{ for every } i.
\]
\end{lemma}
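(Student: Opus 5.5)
We prove the full Lucas theorem in its multiplicative form and read the criterion off from it, using polynomial arithmetic in $\Fp[X]$. \textbf{Step 1: Frobenius.} Since $p\mid\binom{p}{j}$ for $0<j<p$, we have $(1+X)^p=1+X^p$ in $\Fp[X]$. Induction on $i$ then gives
\[
  (1+X)^{p^i}=1+X^{p^i}\qquad\text{for all } i\geq 0.
\]

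\textbf{Step 2: factor $(1+X)^N$.} Write $N=\sum_i N_ip^i$. By Step 1,
\[
  (1+X)^N=\prod_i\bigl((1+X)^{p^i}\bigr)^{N_i}=\prod_i(1+X^{p^i})^{N_i}=\prod_i\sum_{j_i=0}^{N_i}\binom{N_i}{j_i}X^{j_ip^i}
\]
in $\Fp[X]$.

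\textbf{Step 3: compare coefficients of $X^K$.} Expanding the product, each monomial comes from a choice of $(j_i)$ with $0\le j_i\le N_i<p$, and contributes to the exponent $\sum_i j_ip^i$. By uniqueness of base-$p$ expansions, distinct choices give distinct exponents. Hence no terms combine, and the coefficient of $X^K$ is
\[
  \prod_i\binom{N_i}{K_i}
\]
when $K_i\le N_i$ for every $i$, and $0$ otherwise. This coefficient is also $\binom{N}{K}\bmod p$, which proves Lucas' congruence
\[
  \binom{N}{K}\equiv\prod_i\binom{N_i}{K_i}\pmod p,
\]
with the convention $\binom{a}{b}=0$ for $b>a$.

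\textbf{Step 4: conclude.} If some $K_i>N_i$, the product is zero, so $p\mid\binom{N}{K}$. If $K_i\le N_i$ for all $i$, each factor $\binom{N_i}{K_i}$ is a positive integer with $N_i<p$. It divides $N_i!$, which is coprime to $p$, so the factor is nonzero modulo $p$. Since $\Fp$ is a field, the product is nonzero, so $\binom{N}{K}\not\equiv0\pmod p$.

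The only delicate point is the no-collision argument in Step 3. It needs exactly the bound $j_i\le N_i<p$, so that the exponents $\sum_i j_ip^i$ are genuine base-$p$ expansions. Everything else is routine.
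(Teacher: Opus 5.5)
Your proof is correct and takes essentially the same route as the paper: the identity $(1+X)^p=1+X^p$ in $\Fp[X]$, the factorization $(1+X)^N=\prod_i(1+X^{p^i})^{N_i}$, and reading off the coefficient of $X^K$ as $\prod_i\binom{N_i}{K_i}$. You also spell out two points the paper leaves implicit: the no-collision argument via uniqueness of base-$p$ expansions, and why each factor $\binom{N_i}{K_i}$ with $N_i<p$ is nonzero in $\Fp$.
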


\begin{proof}
Work in the polynomial ring $\Fp[X]$.  Since $(1+X)^p=1+X^p$ in
characteristic $p$,
\[
  (1+X)^N
  =\prod_i (1+X)^{N_i p^i}
  =\prod_i (1+X^{p^i})^{N_i}.
\]
The coefficient of $X^K$ in the last product is
\[
  \prod_i \binom{N_i}{K_i},
\]
where a factor is interpreted as $0$ if $K_i>N_i$.  Each nonzero factor is a
nonzero element of $\Fp$, because $0\leq K_i\leq N_i<p$.  Therefore the
coefficient, which is $\binom{N}{K}\bmod p$, is nonzero if and only if
$K_i\leq N_i$ for all $i$.
\end{proof}

\subsection{Primes in the gcd}

\begin{lemma}
\label{lem:prime-divides-n}
Let $k\geq 1$ and
\[
  D(k)=\gcd_{2\leq q\leq k+1}\binom{qk}{k}.
\]
If a prime $p$ divides $D(k)$, then $p\mid k+1$.
\end{lemma}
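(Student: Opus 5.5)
The plan is to treat $\binom{qk}{k}$ as a polynomial in $q$ and compare two $k$-th forward differences. Put
\[
  f(q)=\binom{qk}{k}=\frac{1}{k!}\prod_{j=0}^{k-1}(qk-j),
\]
a polynomial in $q$ of degree exactly $k$ with leading coefficient $k^k/k!$. Since $\Delta$ lowers the degree by one and multiplies the leading coefficient by the degree, $\Delta^k f$ is the constant $k!\cdot k^k/k! = k^k$. By the explicit formula for $\Delta^r f(a)$ in the proof of Lemma~\ref{lem:newton}, this gives, for every integer $a$,
\[
  k^k=\sum_{i=0}^{k}(-1)^{k-i}\binom{k}{i}f(a+i).
\]
All terms here are integers, because $f$ takes integer values at nonnegative integers.

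Now suppose $p$ is a prime dividing $D(k)$, so $p\mid f(q)$ for $2\le q\le k+1$. We use the two boundary values $f(0)=\binom{0}{k}=0$ (since $k\ge1$) and $f(1)=\binom{k}{k}=1$.

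\textbf{Identity at $a=1$.} The nodes are $q=1,\dots,k+1$. Every term except $i=0$ is divisible by $p$, so
\[
  k^k\equiv(-1)^k \pmod p.
\]

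\textbf{Identity at $a=0$.} The nodes are $q=0,\dots,k$. The term $i=0$ vanishes, the terms $i\ge2$ are divisible by $p$, and the term $i=1$ is $(-1)^{k-1}\binom{k}{1}f(1)$. Hence
\[
  k^k\equiv(-1)^{k-1}k \pmod p.
\]

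Equating the two congruences gives $(-1)^k\equiv(-1)^{k-1}k$, that is, $k\equiv-1\pmod p$. So $p\mid k+1$, as claimed. This also shows $p\nmid k$, consistent with the first congruence.

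The only step needing care is the computation $\Delta^k f\equiv k^k$ and the integrality of each term. I would check it directly: write $f(q)=\frac{k^k}{k!}q^k+(\text{lower terms})$ and use $\Delta^k q^k=k!$, while $\Delta^k$ kills all lower-degree terms. The rest is bookkeeping of which nodes fall in the range $2\le q\le k+1$.
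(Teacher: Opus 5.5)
Your proof is correct and is essentially the paper's finite-difference argument. The paper expands $F(0)$ by Newton interpolation around $a=1$, while you compare $\Delta^k f$ at $a=0$ and $a=1$; both come down to the vanishing of $\Delta^{k+1}f(0)$ on the nodes $0,\dots,k+1$ together with $f(0)=0$, $f(1)=1$. You only need $\Delta^k f$ to be constant (degree $\le k$), so computing the exact value $k^k$ and the leading coefficient is unnecessary.
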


\begin{proof}
Set
\[
  F(x)=\binom{kx}{k}.
\]
This is a polynomial in $x$ of degree $k$, and
\[
  F(0)=0,\qquad F(1)=1,\qquad F(q)=\binom{qk}{k}.
\]
Assume that $p$ divides $D(k)$.  Then
\[
  F(2)\equiv F(3)\equiv\cdots\equiv F(k+1)\equiv 0 \pmod p.
\]
For $r=0,\ldots,k$,
\[
  \Delta^rF(1)
  =\sum_{j=0}^{r}(-1)^{r-j}\binom{r}{j}F(1+j).
\]
Modulo $p$, all terms with $j\geq 1$ vanish, while $F(1)=1$.  Hence
\[
  \Delta^rF(1)\equiv (-1)^r \pmod p.
\]
By Lemma~\ref{lem:newton}, evaluated at $x=0$ and $a=1$,
\[
  F(0)=\sum_{r=0}^{k}\binom{-1}{r}\Delta^rF(1).
\]
Since $\binom{-1}{r}=(-1)^r$, we obtain
\[
  0=F(0)\equiv
  \sum_{r=0}^{k}(-1)^r(-1)^r
  =k+1
  \pmod p.
\]
Thus $p\mid k+1$.
\end{proof}

\subsection{Digit boxes}

Fix a prime $p$ and an integer $L\geq 1$.  Put $P=p^L$.  If
$0\leq c<P$ has base-$p$ expansion
\[
  c=\sum_{i=0}^{L-1}c_i p^i,\qquad 0\leq c_i<p,
\]
define the digit box
\[
  \Dc_c=\left\{\sum_{i=0}^{L-1}y_i p^i:\ 0\leq y_i\leq c_i
  \text{ for every }i\right\}.
\]
Thus $0,c\in\Dc_c$, and every element of $\Dc_c$ lies in $[0,c]$.

\begin{lemma}[Gap bound]
\label{lem:gap}
Let $P=p^L$ and $0\leq c<P$.  If the elements of $\Dc_c$ are listed in
increasing order, every gap between two consecutive elements is at most
$P/p$.
\end{lemma}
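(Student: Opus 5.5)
We argue by induction on $L$, peeling off the top digit. Write $c=c_{L-1}p^{L-1}+c'$ with $0\le c'<p^{L-1}$. Then
\[
  \Dc_c=\bigcup_{t=0}^{c_{L-1}}\bigl(tp^{L-1}+\Dc_{c'}\bigr),
\]
where $\Dc_{c'}$ is the digit box of $c'$ at level $L-1$, and the blocks are listed in increasing order because every element of $\Dc_{c'}$ lies in $[0,c']\subseteq[0,p^{L-1})$. Consecutive elements of $\Dc_c$ therefore either lie in the same translated block or are the last element of one block and the first element of the next.

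For the base case $L=1$ we have $\Dc_c=\{0,1,\ldots,c\}$, so every gap equals $1=P/p$. We also note the degenerate case: if $\Dc_c$ has a single element (for instance $c=0$), there are no gaps and the claim is vacuous.

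For the inductive step, gaps within a block $tp^{L-1}+\Dc_{c'}$ are gaps of $\Dc_{c'}$. By the induction hypothesis these are at most $p^{L-1}/p\le p^{L-1}=P/p$. A gap between blocks $t$ and $t+1$ runs from the largest element $tp^{L-1}+c'$ of block $t$ to the smallest element $(t+1)p^{L-1}+0$ of block $t+1$. It has size $p^{L-1}-c'\le p^{L-1}=P/p$, with equality when $c'=0$.

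This completes the induction. There is no real obstacle here; the only point needing care is the correct ordering of the blocks, which we justified above by the bound $c'<p^{L-1}$.
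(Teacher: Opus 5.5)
Your proof is correct and takes the paper's route: induction on $L$, splitting $\Dc_c$ into the translates $tp^{L-1}+\Dc_{c'}$ along the top digit, bounding within-block gaps by the induction hypothesis and between-block gaps by $p^{L-1}-c'\leq P/p$. You also state explicitly why the blocks come in increasing order, namely $\Dc_{c'}\subseteq[0,c']\subseteq[0,p^{L-1})$, which the paper leaves implicit.
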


\begin{proof}
We argue by induction on $L$.  For $L=1$, the set is
$\{0,1,\ldots,c\}$, so all gaps are $1=P/p$.

Assume $L>1$ and write $P=pH$, where $H=p^{L-1}$.  Write
\[
  c=dH+r,\qquad 0\leq d<p,\quad 0\leq r<H.
\]
Then
\[
  \Dc_c=\bigcup_{a=0}^{d}(aH+\Dc_r).
\]
Inside each block $aH+\Dc_r$, the induction hypothesis gives gaps at most
$H/p\leq H=P/p$.  The last element of $aH+\Dc_r$ is $aH+r$, and the first
element of the next block is $(a+1)H$.  The gap between them is
\[
  (a+1)H-(aH+r)=H-r\leq H=P/p.
\]
Therefore all consecutive gaps are at most $P/p$.
\end{proof}

\begin{lemma}[No nonzero translation]
\label{lem:translation}
Let $P=p^L$ and $0\leq c<P-P/p$.  Suppose $0\leq S\leq c$ and
\[
  \res{z+S}{P}\in\{0,1,\ldots,c\}
  \qquad\text{for every }z\in\Dc_c.
\]
Then $S=0$.
\end{lemma}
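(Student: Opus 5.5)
The idea is to argue by contradiction: if $S>0$, the gap bound lets us find an element of $\Dc_c$ that the shift pushes just past $c$, but not far enough to wrap around modulo $P$.

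Suppose $S>0$.

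First I choose a largest element that stays inside $[0,c]$. Since $0\in\Dc_c$ and $0+S=S\leq c$, the set $\{z\in\Dc_c:\ z+S\leq c\}$ is nonempty. Let $z_0$ be its largest element.

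Next I check that $z_0$ has a successor in $\Dc_c$. Because $S>0$, we have $c+S>c$, so $z_0\neq c$. As $c=\max\Dc_c$, there is a next element $z_1>z_0$ of $\Dc_c$ in increasing order. By maximality of $z_0$, this successor satisfies $z_1+S>c$.

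The key step is to bound $z_1+S$ from above using Lemma~\ref{lem:gap}. That lemma gives $z_1-z_0\leq P/p$. Hence
\[
  c< z_1+S\leq z_0+S+P/p\leq c+P/p<P,
\]
where the last inequality uses the hypothesis $c<P-P/p$. So $z_1+S$ lies strictly between $c$ and $P$. In particular $\res{z_1+S}{P}=z_1+S\notin\{0,1,\ldots,c\}$, which contradicts the hypothesis. Therefore $S=0$.

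The only step needing care is this last one: the shifted element must not wrap around modulo $P$. This is exactly where the gap bound and the strict inequality $c<P-P/p$ are combined.
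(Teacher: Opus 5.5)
Your proof is correct and rests on the same key ingredient as the paper: the gap bound (Lemma~\ref{lem:gap}) forces some shifted element $z+S$, $z\in\Dc_c$, into the window $(c,P)$. The difference is only organizational. You take the last $z_0$ with $z_0+S\le c$ and its successor $z_1$, which gives $c<z_1+S\le c+P/p<P$ directly. This single argument replaces the paper's two cases: $S<P-c$, handled with $z=c$, and $S\ge P-c$, handled with the interval $(c-S,P-S)\subseteq[0,c]$.
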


\begin{proof}
Assume $S>0$.

If $S<P-c$, then $c\in\Dc_c$ gives
\[
  \res{c+S}{P}=c+S\in\{c+1,\ldots,P-1\},
\]
contradicting the hypothesis.

It remains to consider $S\geq P-c$.  Since $S\leq c$, the open interval
\[
  I=(c-S,\ P-S)
\]
is contained in $[0,c]$.  Its length is
\[
  (P-S)-(c-S)=P-c>P/p.
\]
If $I$ contained no element of $\Dc_c$, then the consecutive elements of
$\Dc_c$ immediately surrounding $I$ would have a gap of length greater than
$P/p$, contradicting Lemma~\ref{lem:gap}.  Hence there is some
$z\in\Dc_c$ with
\[
  c-S<z<P-S.
\]
Then
\[
  c<z+S<P,
\]
so $\res{z+S}{P}\notin\{0,1,\ldots,c\}$, again contradicting the hypothesis.
Thus $S=0$.
\end{proof}

\begin{theorem}[Digit-box stabilizer]
\label{thm:stabilizer}
Let $P=p^L$ and $0<c<P-P/p$.  Let $s$ be coprime to $p$.  Then
\[
  \res{s y}{P}\in\{1,\ldots,c\}
  \quad\text{for every }y\in\Dc_c\setminus\{0\}
\]
if and only if
\[
  s\equiv 1 \pmod {p^{L-\ord_p(c)}}.
\]
\end{theorem}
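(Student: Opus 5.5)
Throughout, write $v=\ord_p(c)$, so that $c_i=0$ for $i<v$ and $c_v\geq 1$, and put $M=p^{L-v}$. Every element of $\Dc_c$ is then a multiple of $p^v$.

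\textbf{Sufficiency.} Suppose $s\equiv 1\pmod M$. For $y\in\Dc_c$ we have $p^v\mid y$, so $(s-1)y$ is divisible by $Mp^v=P$. Hence $\res{sy}{P}=y\in\{1,\ldots,c\}$ whenever $y\neq 0$.

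\textbf{Necessity: reduction to $v=0$.} Divide everything by $p^v$. Put $P'=p^{L-v}$ and $c'=c/p^v$; then $\Dc_c=p^v\Dc_{c'}$, where $\Dc_{c'}$ is the digit box of $c'$ with $L-v$ digits. The condition $\res{sy}{P}\leq c$ for $y=p^vy'$ is equivalent to $\res{sy'}{P'}\leq c'$. Also $c<P-P/p$ gives $c'<P'-P'/p$, since both sides scale by $p^v$. So we may assume $v=0$, i.e.\ $c_0\geq 1$, and we must show $s\equiv 1\pmod P$.

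\textbf{Necessity: lifting $s\equiv 1$ digit by digit.} We prove $s\equiv 1\pmod{p^j}$ by induction on $j=0,\ldots,L$, with $j=0$ trivial. Suppose $s=1+p^jt$ with $j<L$. For $z\in\Dc_c$,
\[
  sz\equiv z+p^j t z\pmod P,
\]
and $p^jtz \bmod P$ depends only on the digits $z_0,\ldots,z_{L-1-j}$ of $z$.

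The aim is to extract a genuine constant shift and feed it into Lemma~\ref{lem:translation}. Since $c_0\geq 1$, both $1$ and $z+1$ (for $z$ with $z_0=0$) lie in $\Dc_c$. Comparing $s(z+1)$ with $sz+s$ suggests taking $S=\res{p^jt}{P}=\res{s-1}{P}$. Because $1\in\Dc_c$, we know $\res{s}{P}\in[1,c]$, which gives $0\leq S\leq c-1$. I would then check that $\res{z+S}{P}\in\{0,\ldots,c\}$ for all $z\in\Dc_c$. The natural route is to use the hypothesis on the elements $z+1$ when $z_0<c_0$, and to handle $z_0=c_0$ by passing to the complementary element $c-z\in\Dc_c$, whose digits are $c_i-z_i$, via $s(c-z)\equiv sc-sz$. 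Once this is verified, Lemma~\ref{lem:translation} (which uses the hypothesis $c<P-P/p$) forces $S=0$, i.e.\ $s\equiv 1\pmod P$. This collapses all the remaining induction steps at once.

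If the direct shift argument only controls the lowest unresolved digit, I would instead apply Lemma~\ref{lem:translation} to the truncated box of the digits above position $j$, with $P$ replaced by $p^{L-j}$. This yields $p\mid t$, and hence the next step of the induction.

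\textbf{Main obstacle.} The hard step is turning the multiplicative hypothesis, which concerns $z\mapsto \res{sz}{P}$, into the additive translation hypothesis of Lemma~\ref{lem:translation}. In particular, one must verify that the shifted points $\res{z+S}{P}$ land in $[0,c]$ for all $z\in\Dc_c$, including those $z$ whose lowest digit is already maximal. I would attack this first via the complement symmetry $z\mapsto c-z$ of $\Dc_c$, together with $\res{sc}{P}\in[1,c]$.
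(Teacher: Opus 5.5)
Your sufficiency argument and the reduction to $\ord_p(c)=0$ are correct; the paper makes the same reduction one digit at a time in its case $c_0=0$. The gap is the core of the necessity direction. You only announce the claim that $\res{z+S}{P}\in\{0,\ldots,c\}$ for every $z\in\Dc_c$, with $S=\res{s-1}{P}$, and the route you sketch for it is circular. The hypothesis at $z+1$ tells you about $\res{sz+s}{P}$. That equals $\res{(z+1)+S}{P}$ only if you already know $\res{sz}{P}=z$, which is the conclusion. The complement $c-z$ has the same problem: it only constrains $\res{sc-sz}{P}$, i.e.\ the unknown residues $\res{sz}{P}$, never $z+S$. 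Nothing converts multiplication by $s$ on the whole box into a constant additive shift until you know $s$ acts trivially on most of the box. Your fallback fails for the same reason. With $s=1+p^jt$ and $y=a+p^jw$ one gets $sy\equiv a+p^j(ta+w)+p^{2j}tw\pmod P$, so the upper digits are shifted by a constant only when $2j\geq L$. For the first step (getting $s\equiv 1\pmod p$ at all) you give no argument.

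The missing idea is to run the induction on $L$ from the top digits down. Write $c=c_0+pC$ and $s=s_0+pS$ with $c_0\geq 1$, and put $P'=p^{L-1}$. Apply the hypothesis to $pz$ with $z\in\Dc_C\setminus\{0\}$ and divide by $p$: $s$ satisfies the same hypothesis for $\Dc_C$ modulo $P'$, and $C<P'-P'/p$. The induction hypothesis (the full statement at $L-1$) plus the easy direction then give $\res{sz}{P'}=z$ for all $z\in\Dc_C$; this is trivial if $C=0$. Only now does a genuine translation appear. For $1\leq a\leq c_0$ write $s_0a=r_a+pe_a$; then $\lfloor \res{s(a+pz)}{P}/p\rfloor=\res{z+Sa+e_a}{P'}$, and this must be at most $C$ for every $z\in\Dc_C$. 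Lemma~\ref{lem:translation}, with modulus $P'$ and bound $C$, gives $Sa+e_a\equiv 0\pmod{P'}$. Taking $a=1$ gives $S=0$, hence every $e_a=0$ and $s_0c_0<p$. Then $y=c$ gives $s_0c_0\leq c_0$, so $s_0=1$. The base case $L=1$ needs its own argument. The paper observes that $s$ permutes $\{1,\ldots,c\}$ modulo $p$ and compares sums, using $p\nmid c(c+1)/2$ for $0<c<p-1$.
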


\begin{proof}
First assume $s\equiv 1\pmod {p^{L-\ord_p(c)}}$.  Put $v=\ord_p(c)$.  The
lowest $v$ base-$p$ digits of $c$ are $0$, so every $y\in\Dc_c$ is divisible
by $p^v$.  Hence $(s-1)y$ is divisible by $p^{L-v}p^v=P$, and
\[
  \res{s y}{P}=y.
\]
For $y\neq 0$ this lies in $\{1,\ldots,c\}$.

Conversely, assume
\[
  \res{s y}{P}\in\{1,\ldots,c\}
  \qquad(y\in\Dc_c\setminus\{0\}).
\]
We prove the desired congruence by induction on $L$.

If $L=1$, then $P=p$ and $0<c<p-1$.  For $p=2$ there is no such $c$.
For odd $p$, $\Dc_c=\{0,1,\ldots,c\}$.  Multiplication by $s$ is injective on
the nonzero residues modulo $p$, so the image of $\{1,\ldots,c\}$ is exactly
$\{1,\ldots,c\}$.  Taking sums modulo $p$ gives
\[
  s\,\frac{c(c+1)}{2}\equiv \frac{c(c+1)}{2}\pmod p.
\]
Because $0<c<p-1$, the factor $c(c+1)/2$ is nonzero modulo $p$, so
$s\equiv 1\pmod p$.  This is the asserted congruence for $L=1$.

Now let $L>1$ and write $P=pP'$, where $P'=p^{L-1}$.  Decompose
\[
  c=c_0+pC,\qquad s=s_0+pS,
\]
with $0\leq c_0<p$, $1\leq s_0<p$, and $0\leq S<P'$.

If $c_0=0$, then $c=pC$ and
\[
  \Dc_c=p\Dc_C.
\]
For every nonzero $z\in\Dc_C$, the element $pz$ belongs to
$\Dc_c\setminus\{0\}$.  The hypothesis gives
\[
  \res{s(pz)}{P}=p\,\res{s z}{P'}\in\{1,\ldots,pC\}.
\]
Thus
\[
  \res{s z}{P'}\in\{1,\ldots,C\}
  \qquad(z\in\Dc_C\setminus\{0\}).
\]
Also $C<P'-P'/p$, since $pC<P-P/p=(p-1)P'$.  By induction,
\[
  s\equiv 1\pmod {p^{(L-1)-\ord_p(C)}}.
\]
As $\ord_p(c)=1+\ord_p(C)$, this is exactly
\[
  s\equiv 1\pmod {p^{L-\ord_p(c)}}.
\]

It remains to handle $c_0>0$.  Then $\ord_p(c)=0$, so we must prove
$s\equiv 1\pmod P$.

First consider elements $pz$ with $z\in\Dc_C\setminus\{0\}$.  The same
division by $p$ as above gives
\[
  \res{s z}{P'}\in\{1,\ldots,C\}
  \qquad(z\in\Dc_C\setminus\{0\}).
\]
If $C>0$, induction and the already-proved forward direction imply
\[
  \res{s z}{P'}=z\qquad(z\in\Dc_C).
\]
If $C=0$, this conclusion is also true, because $\Dc_C=\{0\}$.

Now fix $a\in\{1,\ldots,c_0\}$ and $z\in\Dc_C$.  The element
$a+pz$ lies in $\Dc_c\setminus\{0\}$.  Write
\[
  s_0a=r_a+p e_a,\qquad 0\leq r_a<p.
\]
Using $\res{s z}{P'}=z$, the quotient after the lowest base-$p$ digit of
$\res{s(a+pz)}{P}$ is
\[
  \res{z+Sa+e_a}{P'}.
\]
Since $\res{s(a+pz)}{P}\leq c_0+pC$, this quotient is at most $C$.  Hence
\[
  \res{z+Sa+e_a}{P'}\in\{0,\ldots,C\}
  \qquad(z\in\Dc_C).
\]
Putting $z=0$ shows that
\[
  R_a:=\res{Sa+e_a}{P'}\in\{0,\ldots,C\},
\]
and the last display becomes
\[
  \res{z+R_a}{P'}\in\{0,\ldots,C\}
  \qquad(z\in\Dc_C).
\]
By Lemma~\ref{lem:translation}, applied with modulus $P'$ and digit bound
$C$, we have $R_a=0$.  Therefore
\[
  Sa+e_a\equiv 0\pmod {P'}
  \qquad(1\leq a\leq c_0).
\]
For $a=1$ we have $e_1=0$, since $s_0<p$.  Thus $S\equiv 0\pmod {P'}$.
Because $0\leq S<P'$, it follows that $S=0$.  Then the congruence above gives
$e_a\equiv 0\pmod {P'}$ for every $a$.  Since $0\leq e_a<p\leq P'$, we get
$e_a=0$ for all $a=1,\ldots,c_0$.  In particular
\[
  s_0c_0<p.
\]

Finally take $a=c_0$ and $z=C$, so that $a+pz=c$.  We have $S=0$, hence
$s=s_0$, and $\res{s C}{P'}=C$.  Since $s_0c_0<p$, there is no carry from the
lowest digit, so
\[
  \res{s c}{P}=s_0c_0+pC.
\]
The hypothesis says this is at most $c=c_0+pC$.  Therefore
$s_0c_0\leq c_0$.  Because $c_0>0$, we conclude $s_0=1$.  Together with
$S=0$, this gives $s=1$, i.e. $s\equiv 1\pmod P$.  This completes the
induction and the proof.
\end{proof}

\subsection{The zero-run lemma}

\begin{lemma}[Zero-run lemma]
\label{lem:zero-run}
Let $p$ be prime, let $m\geq 1$, and let $M$ be coprime to $p$.  Assume
$p\nmid m+1$.  Let $P=p^L$ be the least power of $p$ satisfying $m<P$.
If
\[
  \binom{m+tM}{m}\equiv 0\pmod p
  \qquad(t=1,\ldots,m),
\]
then
\[
  M\equiv -1\pmod P.
\]
\end{lemma}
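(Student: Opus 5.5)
The plan is to turn the vanishing hypothesis, via Lemma~\ref{lem:lucas}, into a statement about the digit box $\Dc_c$ with $c=P-1-m$. Theorem~\ref{thm:stabilizer} should then force the inverse of $M$ modulo $P$ to be $\equiv -1$.

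\emph{Setup.} Since $P=p^L$ is the least power of $p$ with $m<P$, we have $P/p\leq m<P$. Put $c=P-1-m$, so that $0\leq c\leq P-1-P/p<P-P/p$. Because $p\nmid m+1$, the lowest base-$p$ digit $m_0$ of $m$ is not $p-1$. Hence $c\neq 0$, and the lowest digit of $c$ is $p-1-m_0\neq 0$, i.e.\ $\ord_p(c)=0$. In base $p$ the digits of $c$ are exactly $c_i=p-1-m_i$ for $i<L$.

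\emph{Step 1: reformulate via Lucas.} The number $m$ has no digits beyond position $L-1$. So by Lemma~\ref{lem:lucas}, $\binom{N}{m}\not\equiv 0\pmod p$ if and only if each of the lowest $L$ digits of $N$ dominates the corresponding digit of $m$. This condition depends only on $\res{N}{P}$. Writing $\res{N}{P}=m+w$, digit domination with $m_i+w_i'$ digits is equivalent to $\res{N}{P}-m\in\Dc_c$, since the admissible digit excess at position $i$ is at most $p-1-m_i=c_i$. Thus
\[
  \binom{m+tM}{m}\not\equiv 0 \pmod p
  \quad\Longleftrightarrow\quad
  \res{tM}{P}\in\Dc_c .
\]
The hypothesis therefore says that $\res{tM}{P}\notin\Dc_c$ for $t=1,\ldots,m$. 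This equivalence needs a careful but routine check that no carries interfere: $m+y$ with $y\in\Dc_c$ is computed digitwise without carry and stays below $P$.

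\emph{Step 2: invert.} Let $M'$ be the inverse of $M$ modulo $P$, which exists since $p\nmid M$. Take $y\in\Dc_c\setminus\{0\}$ and set $t=\res{M'y}{P}$. Then $t\neq 0$, because $0<y\leq c<P$. Moreover $\res{tM}{P}=y\in\Dc_c$, so by Step~1 we cannot have $1\leq t\leq m$. Hence $t\geq m+1=P-c$. This gives
\[
  \res{(-M')y}{P}=P-t\in\{1,\ldots,c\}\qquad(y\in\Dc_c\setminus\{0\}).
\]

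\emph{Step 3: apply the stabilizer theorem.} We have $0<c<P-P/p$ and $s=-M'$ coprime to $p$. Theorem~\ref{thm:stabilizer} then yields $-M'\equiv 1\pmod{p^{L-\ord_p(c)}}=\pmod P$, since $\ord_p(c)=0$. Therefore $M\equiv -1\pmod P$.

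The main obstacle is Step~1: stating the Lucas condition precisely in terms of $\Dc_c$ and checking that the passage to residues modulo $P$ is legitimate, i.e.\ that higher digits of $m+tM$ are irrelevant and that the complement map $y\mapsto c-y$ preserves the box. Steps~2 and~3 are then short bookkeeping.
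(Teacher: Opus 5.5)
Your proposal is correct and follows the paper's proof essentially step for step. It uses the same choice $c=P-1-m$, the same Lucas reformulation $\binom{m+tM}{m}\not\equiv 0 \iff \res{tM}{P}\in\Dc_c$, inversion modulo $P$, and Theorem~\ref{thm:stabilizer} with $\ord_p(c)=0$. The only cosmetic difference is that the paper first normalizes the inverse as $U\equiv -s$ with $s\in\{1,\ldots,c\}$ before invoking the stabilizer, whereas you apply it directly to a representative of $-M'$; this works equally well because the conclusion is already a congruence modulo $P$.
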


\begin{proof}
Put
\[
  c=P-1-m.
\]
Since $P/p\leq m<P$, we have
\[
  0\leq c<P-P/p.
\]
The assumption $p\nmid m+1$ gives $p\nmid c$, because $m+1=P-c$.  Hence
$c>0$.

By Lemma~\ref{lem:lucas}, the congruence
$\binom{m+tM}{m}\not\equiv 0\pmod p$ is equivalent to the assertion that the
base-$p$ addition of $m$ and $tM$ has no carry.  Since $m<P$, this is
equivalent to
\[
  \res{tM}{P}\in\Dc_c.
\]
The hypothesis therefore says
\[
  \res{tM}{P}\notin\Dc_c
  \qquad(t=1,\ldots,m).
\]

Let $U$ be the inverse of $M$ modulo $P$.  Since $p\nmid c$, the element
$1$ belongs to $\Dc_c$.  Thus $U$ cannot lie in $\{1,\ldots,m\}$; also
$U\not\equiv 0\pmod P$.  Hence there is a unique $s\in\{1,\ldots,c\}$ such
that
\[
  U\equiv -s\pmod P.
\]

For any $y\in\Dc_c\setminus\{0\}$, the residue $\res{Uy}{P}$ cannot lie in
$\{1,\ldots,m\}$; otherwise, if $\res{Uy}{P}=t$ with $1\leq t\leq m$, then
$\res{tM}{P}=y\in\Dc_c$, a contradiction.  Therefore
\[
  \res{Uy}{P}\in\{m+1,\ldots,P-1\}
  =\{P-c,\ldots,P-1\}.
\]
Since $U\equiv -s\pmod P$, this is equivalent to
\[
  \res{s y}{P}\in\{1,\ldots,c\}
  \qquad(y\in\Dc_c\setminus\{0\}).
\]
By Theorem~\ref{thm:stabilizer}, and because $p\nmid c$, we get
$s\equiv 1\pmod P$.  Since $1\leq s\leq c<P$, this means $s=1$.  Hence
$U\equiv -1\pmod P$, and therefore $M\equiv -1\pmod P$.
\end{proof}

\subsection{The primary criterion}

\begin{lemma}
\label{lem:primary}
Let $n\geq 2$, let $p^a\Vert n$, and write
\[
  A=p^a,\qquad n=Ab,\qquad p\nmid b.
\]
Put
\[
  G_n=\gcd_{2\leq q\leq n}\binom{q(n-1)}{n-1}.
\]
Then
\[
  p\mid G_n
  \quad\Longleftrightarrow\quad
  b\leq A.
\]
\end{lemma}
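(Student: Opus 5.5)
We reduce divisibility of each $\binom{q(n-1)}{n-1}$ by $p$ to a carry condition via Lemma~\ref{lem:lucas}, and then settle the resulting condition with Lemma~\ref{lem:zero-run} in one direction and a direct carry argument in the other. Put $k=n-1=Ab-1$. By Lemma~\ref{lem:lucas}, $p\nmid\binom{qk}{k}$ if and only if the base-$p$ addition $k+(q-1)k$ has no carry.

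\emph{Step 1: reduce to a shorter addition.} Since $k\equiv -1\pmod A$, the lowest $a$ base-$p$ digits of $k$ are all $p-1$. Hence a carry-free addition forces the lowest $a$ digits of $(q-1)k$ to vanish, that is, $A\mid (q-1)k$. Since $p\nmid k$, this means $A\mid q-1$. So if $q\not\equiv 1\pmod A$, then $p$ divides $\binom{qk}{k}$ automatically.

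If instead $q-1=tA$, we strip the $a$ low digits. The high part of $k$ is $\lfloor k/A\rfloor=b-1=:m$, and $(q-1)k=A\cdot tk$. So with $M:=k$, which is coprime to $p$, we get
\[
p\nmid\binom{qk}{k}\iff \binom{m+tM}{m}\not\equiv 0\pmod p .
\]
As $q$ runs over $2,\dots,n$ with $q\equiv1\pmod A$, the index $t=(q-1)/A$ runs exactly over $1,\dots,b-1=m$, because $q-1\le Ab-1$. Therefore
\[
p\mid G_n\iff \binom{m+tM}{m}\equiv 0\pmod p\ \text{ for all } t=1,\dots,m .
\]
When $b=1$ the range of $t$ is empty, so $p\mid G_n$; this agrees with $b\le A$.

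\emph{Step 2: $b\le A$ implies $p\mid G_n$.} Here $m=b-1<A$, so the least power $P=p^L$ with $m<P$ satisfies $P\le A$. Since $P\mid A$, we have $M=Ab-1\equiv -1\pmod P$, and hence $[tM]_P=P-t$. The lowest $L$ digits of $m$ represent $m$ itself, since $m<P$. Their sum with the lowest $L$ digits of $tM$ is $m+P-t\ge P$ for $1\le t\le m$. So a carry leaves position $L-1$, and every such binomial coefficient is divisible by $p$.

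\emph{Step 3: $b>A$ implies $p\nmid G_n$.} Suppose instead that $p\mid G_n$ while $b>A$. Then $m=b-1\ge A\ge1$, and $p\nmid m+1=b$. So Lemma~\ref{lem:zero-run} applies and gives $M\equiv-1\pmod P$, where $P$ is the least power of $p$ exceeding $m$. Since $m\ge A$, we have $P\ge pA$. Then $P\mid M+1=Ab$ gives $pA\mid Ab$, i.e. $p\mid b$, a contradiction.

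The main difficulty is the bookkeeping in Step 1. One must check that the $a$ low digits of $k$ all equal $p-1$, so that this block forces $A\mid q-1$, and that dividing by $A$ leaves exactly the carry problem for $m$ and $tk$. This relies on $A\cdot tk$ having zero low digits. It also requires verifying that the range of $t$ is exactly $\{1,\dots,m\}$, so that the hypothesis of Lemma~\ref{lem:zero-run} matches. The heavy lifting is already contained in Lemma~\ref{lem:zero-run}.
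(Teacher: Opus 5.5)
Your proof is correct and takes essentially the same route as the paper. Lucas' criterion forces $q\equiv 1\pmod A$, and stripping the $a$ low digits reduces everything to $\binom{m+tM}{m}$ with $m=b-1$, $M=Ab-1$, $1\le t\le b-1$; the direction $b\le A$ is a direct digit/carry comparison using $M\equiv -1$ modulo a power of $p$ exceeding $m$, and the converse is Lemma~\ref{lem:zero-run}. The differences are cosmetic: you phrase Lucas in terms of carries, and you close Step~3 by contradiction ($P\ge pA$ forces $p\mid b$) rather than by the paper's direct chain $P\mid Ab\Rightarrow P\mid A\Rightarrow b\le P\le A$.
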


\begin{proof}
Let $K=n-1=Ab-1$.

First suppose $p\nmid \binom{qK}{K}$.  By Lemma~\ref{lem:lucas}, the lower
$a$ base-$p$ digits of $qK$ must dominate the lower $a$ base-$p$ digits of
$K$.  But
\[
  K=Ab-1\equiv -1\pmod A,
\]
so those lower $a$ digits of $K$ are all $p-1$.  Hence the lower $a$ digits
of $qK$ are also all $p-1$, i.e.
\[
  qK\equiv -1\pmod A.
\]
As $K\equiv -1\pmod A$, this gives $q\equiv 1\pmod A$.

Thus any possible nonzero Lucas witness among $2\leq q\leq n=Ab$ has the
form
\[
  q=1+At,\qquad 1\leq t\leq b-1.
\]
For such $q$,
\[
  qK=(1+At)(Ab-1)=A\bigl(b+t(Ab-1)\bigr)-1.
\]
After the common lower $a$ digits, Lucas' criterion compares the base-$p$
digits of
\[
  b-1
  \quad\text{and}\quad
  b-1+t(Ab-1).
\]

Assume first that $b\leq A$.  For $1\leq t\leq b-1$,
\[
  b-1+t(Ab-1)\equiv b-t-1\pmod A,
\]
and $0\leq b-t-1<b-1\leq A-1$.  If the base-$p$ digits of $b-1$ were all
bounded by those of $b-1+t(Ab-1)$, then in particular the lowest $a$ digits
would represent a number at least $b-1$.  But those lowest $a$ digits
represent $b-t-1<b-1$.  Therefore Lucas' criterion fails for every
$t=1,\ldots,b-1$.  Hence every coefficient in the gcd is divisible by $p$,
so $p\mid G_n$.

Conversely, assume $p\mid G_n$.  Then for every $t=1,\ldots,b-1$,
\[
  \binom{(1+At)(Ab-1)}{Ab-1}\equiv 0\pmod p.
\]
Equivalently, after removing the common lower $a$ digits as above,
\[
  \binom{(b-1)+t(Ab-1)}{b-1}\equiv 0\pmod p
  \qquad(t=1,\ldots,b-1).
\]
Set
\[
  m=b-1,\qquad M=Ab-1.
\]
If $m=0$, then $b=1\leq A$ and there is nothing to prove.  Otherwise
$m\geq 1$, $p\nmid m+1=b$, and $p\nmid M$.  Let $P=p^L$ be the least
$p$-power with $m<P$.  By Lemma~\ref{lem:zero-run},
\[
  M\equiv -1\pmod P.
\]
Thus
\[
  Ab-1\equiv -1\pmod P,
\]
so $P\mid Ab$.  Since $p\nmid b$, this implies $P\mid A$.  Finally
$b=m+1\leq P\leq A$.  Therefore $b\leq A$.
\end{proof}

\subsection{Completion of the proof}

\begin{proof}[Proof of Theorem~\ref{thm:a080170}]
Let
\[
  D(k)=\gcd_{2\leq q\leq k+1}\binom{qk}{k},
  \qquad n=k+1.
\]
The gcd is a positive integer.  By Lemma~\ref{lem:prime-divides-n}, every
prime divisor of $D(k)$ divides $n$.

For a prime divisor $p$ of $n$, write $p^a\Vert n$, put $A=p^a$, and write
$n=Ab$.  Lemma~\ref{lem:primary}, with $G_n=D(k)$, gives
\[
  p\mid D(k)
  \quad\Longleftrightarrow\quad
  b\leq A
  \quad\Longleftrightarrow\quad
  \frac{n}{p^a}\leq p^a.
\]
Consequently,
\[
  D(k)=1
  \quad\Longleftrightarrow\quad
  \frac{n}{p^a}>p^a
  \text{ for every exact prime-power component }p^a\Vert n.
\]

It remains only to observe that this family of inequalities is equivalent to
the single inequality involving the largest exact prime-power component
$P=\ppart(n)$.  If $n/P>P$ and $A=p^a\Vert n$ is any other exact component,
then $A\leq P$, and hence
\[
  \frac{n}{A}\geq \frac{n}{P}>P\geq A.
\]
Conversely, if the inequality fails for some exact component $A$, then
$n/A\leq A\leq P$, so
\[
  \frac{n}{P}\leq \frac{n}{A}\leq P.
\]
Thus all component inequalities hold if and only if $n/P>P$.  This proves
the theorem.
\end{proof}

\section{The Lean formalization}
\label{sec:lean}

The theorem has also been formalized in Lean in
\texttt{FormalConjectures/OEIS/80170.lean} in the
\texttt{formal-conjectures} repository, under the statement
\texttt{OeisA80170.gcdCondition\_iff\_primePowerCondition}.  The file imports
\texttt{FormalConjectures.Util.ProblemImports} and
\texttt{Mathlib.Algebra.IsPrimePow}.  This section describes the organization
of that formal proof.  It is included because the Lean development is a
substantial part of the result: it fixes the exact formal meaning of the OEIS
statement, builds the auxiliary infrastructure used in the proof, and checks
the final assembly in the Lean kernel.  The natural-language proof in this
article and the Lean formalization were both generated by the MechMath Agent
Team \cite{mechmath}.

\subsection{Benchmark statement and compatibility layer}
\label{subsec:lean-target}

The Formal Conjectures file starts from the benchmark predicates rather than
from the notation used in Theorem~\ref{thm:a080170}.  In the displays below,
we use line-broken ASCII renderings of the Lean signatures; the linked source
contains the exact Unicode syntax.  The benchmark-facing statement is:

\begin{lstlisting}[style=lean]
def GCDCondition (k : Nat) : Prop :=
  (Finset.range k).gcd
    (fun i => Nat.choose ((i + 2) * k) k) = 1

def PrimePowerCondition (k : Nat) : Prop :=
  let P := ((Nat.divisors k).filter IsPrimePow).max.getD 0
  k / P > P

@[category research solved, AMS 11]
theorem gcdCondition_iff_primePowerCondition
    (k : Nat) (hk : 2 <= k) :
    GCDCondition k <-> PrimePowerCondition (k + 1)
\end{lstlisting}

Thus the formal theorem proves the equivalence between the gcd predicate at
\(k\) and the prime-power predicate at \(k+1\).  The informal proof, however,
is stated in terms of
\[
  D(k)=\gcd_{2\leq q\leq k+1}\binom{qk}{k}
  \qquad\text{and}\qquad n=k+1.
\]
The formal proof therefore introduces the paper-level definitions:

\begin{lstlisting}[style=lean]
def D (k : Nat) : Nat :=
  (Finset.Icc 2 (k + 1)).gcd
    fun q => Nat.choose (q * k) k

def ppart (n : Nat) : Nat :=
  n.primeFactors.sup
    fun p => p ^ n.factorization p

def digitBox (p L c : Nat) : Finset Nat :=
  (Finset.range (p ^ L)).filter
    fun y => forall i, i < L ->
      y / p ^ i % p <= c / p ^ i % p
\end{lstlisting}

The main theorem is obtained only after two compatibility bridges.  The first
bridge identifies the benchmark gcd over \texttt{Finset.range k} with the
paper's gcd over \texttt{Finset.Icc 2 (k + 1)}.  The second bridge identifies
the benchmark's maximum over prime-power divisors with the exact component
\(\ppart(n)=\max_{p^a\Vert n}p^a\).

\begin{lstlisting}[style=lean]
theorem gcdCondition_iff_D_eq_one (k : Nat) :
    GCDCondition k <-> D k = 1

theorem max_primePow_divisor_eq_ppart
    (n : Nat) (hn : 2 <= n) :
    ((Nat.divisors n).filter IsPrimePow).max.getD 0
      = ppart n

theorem primePowerCondition_iff_ppart
    (n : Nat) (hn : 2 <= n) :
    PrimePowerCondition n <-> n / ppart n > ppart n
\end{lstlisting}

These statements are mostly bookkeeping from a mathematical point of view,
but they are essential in Lean: the finite indexing set, the shift from \(k\)
to \(k+1\), the exact exponent \texttt{n.factorization p}, and the maximum
over prime-power divisors all have to be expressed as the same objects before
the final theorem can call the proof of Theorem~\ref{thm:a080170}.

\subsection{Formalizing the proof infrastructure}
\label{subsec:lean-infrastructure}

The formalization follows the proof section in two main preparatory blocks.
The first block proves the finite-difference reduction.  Its general tool is
\texttt{newton\_interpolation}, a Newton interpolation formula over
\(\mathbb Q\) for forward differences.  It is then specialized to the
polynomial attached to \(\binom{xk}{k}\), yielding the divisibility theorem
used in Lemma~\ref{lem:prime-divides-n}.

\begin{lstlisting}[style=lean]
theorem newton_interpolation
    (f : Polynomial Rat) (d : Nat) (hf : f.natDegree <= d)
    (a : Int) (x : Rat) :
    -- Newton expansion of f.eval x by forward differences at a

theorem prime_dvd_succ_of_dvd_D
    (k p : Nat) (hk : 1 <= k)
    (hp : p.Prime) (hdiv : p | D k) :
    p | k + 1
\end{lstlisting}

The second block formalizes the Lucas-theoretic criterion.  The paper states
this in base-\(p\) digit language; the Lean statement uses the explicit digit
formula \(N/p^i\bmod p\) at every position \(i\).

\begin{lstlisting}[style=lean]
theorem lucas_nonvanishing
    (p : Nat) (hp : p.Prime) (N K : Nat) :
    not (p | Nat.choose N K) <->
      forall i, K / p ^ i % p <= N / p ^ i % p
\end{lstlisting}

The proof calls Mathlib's formalized Lucas theorem inside this lemma.  After
\texttt{lucas\_nonvanishing} is available, congruence questions about
binomial coefficients can be converted into digit inequalities and then into
membership statements for \texttt{digitBox}.

\subsection{Digit boxes and the zero-run lemma}
\label{subsec:lean-digit-boxes}

The longest local development is the digit-box part.  The final stabilizer
statement in \texttt{80170.lean} is a multiplicative stabilizer theorem:
under the hypotheses of Theorem~\ref{thm:stabilizer}, multiplication by
\(s\) preserves the nonzero part of the digit box if and only if \(s\) is
congruent to \(1\) modulo the indicated power of \(p\).

\begin{lstlisting}[style=lean]
theorem digitBox_stabilizer
    (p L c s : Nat) (hp : p.Prime) (hL : 1 <= L)
    (hc0 : 0 < c) (hc : c < p ^ L - p ^ L / p)
    (hs : Nat.Coprime s p) :
    (forall y, y in digitBox p L c -> y != 0 ->
       1 <= s * y % p ^ L
       and s * y % p ^ L <= c)
    <->
    s == 1 [MOD p ^ (L - c.factorization p)]
\end{lstlisting}

The proof of this theorem is decomposed into smaller finite-set and digit
lemmas.  The basic membership and digit estimates are handled by
\texttt{mem\_digitBox}, \texttt{mem\_digitBox\_succ}, and
\texttt{le\_of\_mem\_digitBox}.  The gap and translation arguments are
isolated as \texttt{gap\_bound} and \texttt{no\_nonzero\_translation}; these
lemmas supply the finite combinatorial core needed by
\texttt{digitBox\_stabilizer}.

The zero-run lemma then turns a run of binomial divisibilities into a modular
conclusion.  Its formal statement is:

\begin{lstlisting}[style=lean]
theorem zero_run
    (p m M L : Nat) (hp : p.Prime) (hm : 1 <= m)
    (hM : Nat.Coprime M p) (hm1 : not (p | m + 1))
    (hLm : p ^ (L - 1) <= m) (hmL : m < p ^ L)
    (h : forall t, t in Finset.Icc 1 m ->
          p | Nat.choose (m + t * M) m) :
    (M : ZMod (p ^ L)) = -1
\end{lstlisting}

The conversions leading to \texttt{zero\_run} are also named explicitly in
the file:

\begin{lstlisting}[style=lean]
lemma noCarry_iff_mem_box
lemma not_dvd_choose_iff_no_carry
lemma not_dvd_choose_iff_mem_box
\end{lstlisting}

These lemmas are where the informal phrase ``no carry in base \(p\)''
becomes a statement about finite-set membership in \texttt{digitBox}.  This
extra layer is one of the reasons the formal proof is much longer than the
paper proof: every use of a residue modulo \(p^L\), every digit inequality,
and every conversion between a binomial congruence and a carry condition has
to be stated and typed explicitly.

\subsection{Primary criterion and final assembly}
\label{subsec:lean-assembly}

The local \(p\)-primary criterion is the central formal theorem connecting
the digit-box work back to the gcd:

\begin{lstlisting}[style=lean]
theorem primary_criterion
    (n p b : Nat) (hn : 2 <= n) (hp : p.Prime)
    (ha : 1 <= n.factorization p)
    (hb : n = p ^ n.factorization p * b)
    (hpb : not (p | b)) :
    p | D (n - 1) <-> b <= p ^ n.factorization p
\end{lstlisting}

The exponent in this statement is not a separate variable.  It is the exact
exponent \(\texttt{n.factorization p}\), so the hypotheses encode
\(n=p^a b\), \(a\geq 1\), and \(p\nmid b\) directly in Lean.  The proof uses
\texttt{lucas\_nonvanishing} for the easy direction and
\texttt{zero\_run} for the converse direction.

The paper theorem is then formalized as \texttt{a080170}:

\begin{lstlisting}[style=lean]
theorem a080170 (k : Nat) (hk : 2 <= k) :
    D k = 1 <->
      (k + 1) / ppart (k + 1) > ppart (k + 1)
\end{lstlisting}

Finally, the benchmark theorem is a short assembly step:

\begin{lstlisting}[style=lean]
@[category research solved, AMS 11]
theorem gcdCondition_iff_primePowerCondition
    (k : Nat) (hk : 2 <= k) :
    GCDCondition k <-> PrimePowerCondition (k + 1) := by
  rw [gcdCondition_iff_D_eq_one k, a080170 k hk,
    primePowerCondition_iff_ppart (k + 1) (by omega)]
\end{lstlisting}

This last block clearly shows the role of the compatibility layer: after the
benchmark predicates are rewritten into \(D(k)=1\) and
\((k+1)/\ppart(k+1)>\ppart(k+1)\), the kernel-checked proof is exactly the
formal theorem \texttt{a080170}.  The pinned source file contains no
\texttt{sorry}, and the final theorem is marked as a solved research
benchmark in the Formal Conjectures development.

\subsection{Remarks and availability}
\label{sec:remarks}

The complete Lean proof code is available at the pinned commit URL
\begin{center}
\url{https://github.com/guodk/formal-conjectures/blob/0720658844d76a50d48e4baa152eef14d4462907/FormalConjectures/OEIS/80170.lean#L1823}.
\end{center}
The corresponding pull request to the Formal Conjectures repository is
\begin{center}
\url{https://github.com/google-deepmind/formal-conjectures/pull/4253}.
\end{center}
The pinned code link is used to identify the exact artifact discussed in this
paper, while the pull request records the review context in the upstream
project.

\section{Conclusions}
\label{sec-conc}
In this paper, a binomial gcd criterion is proven. The criterion is listed as OEIS A080170 and appears as conjecture (17) in Ralf Stephan's list of OEIS conjectures.
Furthermore, the proof has been formalized in Lean, ensuring its correctness.
The problem under consideration is also a member of
\texttt{FC100OpenSet1}, a distinguished subset of 100 open research problems
in the Formal Conjectures benchmark. 
Our Lean proof has been accepted by the Formal Conjectures project.

\bibliographystyle{KLMM/klmm}
\bibliography{refs}

\end{document}